\author[L. \'Alvarez-C\'onsul]{Luis \'Alvarez-C\'onsul}
\address{Instituto de Ciencias Matem\'aticas (CSIC-UAM-UC3M-UCM)\\ Nicol\'as Cabrera 13--15, Cantoblanco\\ 28049 Madrid, Spain}
\email{l.alvarez-consul@icmat.es}
\author[M. Garcia-Fernandez]{Mario Garcia-Fernandez}
\address{Universidad Aut\'onoma de Madrid and
	Instituto de Ciencias Matem\'aticas (CSIC-UAM-UC3M-UCM)\\ Ciudad
	Universitaria de Cantoblanco\\ 28049 Madrid, Spain}
\email{mario.garcia@icmat.es}
\author[O. Garc\'{\i}a-Prada]{Oscar Garc\'{\i}a-Prada}
\address{Instituto de Ciencias Matem\'aticas (CSIC-UAM-UC3M-UCM)\\ Nicol\'as Cabrera 13--15, Cantoblanco\\ 28049 Madrid, Spain}
\email{oscar.garcia-prada@icmat.es}
\author[V. P. Pingali]{Vamsi Pritham Pingali}
\address{Department of Mathematics, Indian Institute of Science, Bangalore, India - 560012}
\email{vamsipingali@iisc.ac.in}
\author[C.-J. Yao]{Chengjian Yao}
\address{ Institute of Mathematical Sciences, ShanghaiTech University, 393 Middle Huaxia Road, Pudong, 
	Shanghai, 201210 China.}
\email{yaochj@shanghaitech.edu.cn}
\thanks{
The work of the first three authors was partially supported by the Spanish Ministry of Science and Innovation, through the `Severo Ochoa Programme for Centres of Excellence in R\&D' (CEX2019-000904-S) and under grants PID2019-109339GB-C31, PID2019-109339GA-C32, and EUR2020-112265. The fourth author is partially supported by an SERB MATRICS grant : MTR/2020/000100 and also by grant F.510/25/CAS-II/2018(SAP-I) from UGC (Govt. of India).}
\def\YYint#1#2#3{{\setbox0=\hbox{$#1{#2#3}{\int}$}
		\vcenter{\hbox{$#2#3$}}\kern-.52\wd0}}
\theoremstyle{plain}
\newtheorem{theorem}{Theorem}[section]
\newtheorem{lemma}[theorem]{Lemma}
\newtheorem{proposition}[theorem]{Proposition}
\newtheorem*{theorem*}{Theorem}
\theoremstyle{definition}
\newtheorem{definition}[theorem]{Definition}
\newtheorem{definition-theorem}[theorem]{Definition-Theorem}
\theoremstyle{remark}
\newtheorem{remark}[theorem]{Remark}
\numberwithin{equation}{section} \setcounter{tocdepth}{1}
\newcommand{\dbar}{\bar{\partial}}
\newcommand{\PP}{{\mathbb P}}
\newcommand{\surj}{\to\kern-1.8ex\to}
\newcommand{\cF}{\mathcal{F}}
\title[Self-dual Einstein-Maxwell-Higgs equations on a compact surface]{Obstructions to the existence of solutions of the self-dual Einstein-Maxwell-Higgs equations on a compact surface}
\begin{document}
	\maketitle

\begin{abstract}
In this note we present an obstruction to the existence of solutions to the self-dual Einstein-Maxwell-Higgs equations on a compact surface, which depends on the multiplicities of the zeroes of the \emph{Higgs field} and the \emph{vortex number} $N$. In particular, we exhibit infinitely many examples of Higgs fields for which solutions cannot exist.

\end{abstract} 
\vspace{0.5cm}

\section{Introduction}

Let $S$ be a smooth oriented compact surface, and $\{p_1, \cdots, p_d\}$ be a collection a distinct points on $S$ with positive integer multiplicities $n_1, \cdots, n_d \in \mathbb{N}$, respectively. Give a smooth metric $g$ on $S$ and smooth function $u:S\backslash\{p_1, \cdots, p_d\}\to \mathbb{R}$ defined away from the points consider the system of PDE
\begin{equation}\label{eq:Yangreduction}
\begin{split}
\Delta_g u & = \frac{1}{\varepsilon^2}(e^u - \tau^2 )+4\pi\sum_{j=1}^d n_j \delta_{p_j},\\
K_g  & = -\frac{a}{2 \tau^2}\left[ \frac{\tau^2}{\varepsilon^2}(e^u-\tau^2) -\Delta_g e^u  \right].
\end{split}
\end{equation}
Here $\Delta_g$ is Laplace-Beltrami operator induced by the metric $g$, $K_g$ is the Gaussian curvature of $g$, $\delta_{p_j}$ is the Dirac distribution on $(S,\omega)$ concentrated at $p_j$, where $\omega$ is the area (volume) form of $g$, and $\tau,\varepsilon, a$ are positive real constants. With our convention, the Laplacian has negative eigenvalues. As proved by Y. Yang, the system \eqref{eq:Yangreduction} is equivalent to the self-dual Einstein-Maxwell-Higgs equations arising in theoretical physics \cite{Yang1992,Yang}.

The aim of this note is to present an obstruction to the existence of solutions of \eqref{eq:Yangreduction}, and hence to the self-dual Einstein-Maxwell-Higgs equations on a compact surface, which depends on the configuration of points $p_j$ and their multiplicities $n_j$. Observe that, taking $g = e^\eta g_0$ for a smooth function $\eta$ and fixed background metric $g_0$ and
using the formula for the change in the scalar curvature
$$
2K_g = e^{-\eta}(2K_{g_0} - \Delta_{g_0} \eta),
$$
the previous system is equivalent to (cf. \cite[Equation (1.1)]{HLS})
\begin{equation}\label{eq:Yangreduction2}
\begin{split}
\Delta_{g_0} u & = \frac{1}{\varepsilon^2}e^\eta(e^u - \tau^2 )+4\pi\sum_{j=1}^d n_j \delta_{p_j},\\
\Delta_{g_0}(\eta + \tfrac{a}{\tau^2} e^u)  & = 2K_{g_0} + \frac{a}{\varepsilon^2}e^\eta (e^u - \tau^2).
\end{split}
\end{equation}
Here $\delta_{p_j}$ is the Dirac distribution on $(S,\omega_0)$ concentrated at $p_j$, where $\omega_0$ is the area form of $g_0$. Define the \emph{vortex number} by
$$
N = \sum_{j=1}^d n_j.
$$
Or main result can be stated as follows. 

\begin{theorem}\label{thm:dleq2}

Suppose that \eqref{eq:Yangreduction2} admits a solution $(u,\eta)$ with $d \leq 2$ for some parameter $\varepsilon>0$ and some smooth background metric $g_0$ on $S$, where $\eta$ is a smooth function on $S$ and $u:S\backslash\{p_1, \cdots, p_d\}\to \mathbb{R}$ is a smooth function defined away from the points. Then, there must hold that
$$
d = 2 \quad \textrm{and}  \quad n_1 = n_2.
$$
\end{theorem}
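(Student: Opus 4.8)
The plan is to eliminate one equation, recognise a hidden flat metric, pin that metric down explicitly when $d\le 2$, and then play two integral identities against each other.

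\emph{Preliminaries.} Integrating the first equation of \eqref{eq:Yangreduction2} over $S$ gives $\frac1{\varepsilon^2}\int_S e^\eta(e^u-\tau^2)\,\omega_0=-4\pi N$; integrating the second and applying Gauss--Bonnet then forces $\chi(S)=aN$, so $S=S^2$ and $aN=2$. Evaluating the first equation at an interior maximum of $u$ (which exists, since $u\to-\infty$ at each $p_j$) yields $e^u<\tau^2$ everywhere. Now subtract $a$ times the first equation from the second: with $w\defeq\eta+\tfrac{a}{\tau^2}e^u-au$ one gets $\Delta_{g_0}w=2K_{g_0}-4\pi a\sum_j n_j\delta_{p_j}$. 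Equivalently, setting $g_2\defeq e^{\eta+\frac{a}{\tau^2}e^u}g_0$ (a smooth metric on $S^2$) and $g_1\defeq e^{-au}g_2=e^w g_0$, a short conformal computation gives $K_{g_2}=\tfrac{a}{2\varepsilon^2}e^{-\frac{a}{\tau^2}e^u}(\tau^2-e^u)>0$ and $\Delta_{g_2}u=-\tfrac2a K_{g_2}$ on $S^2\setminus\{p_j\}$, while $K_{g_1}=0$ away from the $p_j$; near $p_j$ one has $u=2n_j\log\mathrm{dist}+O(1)$, so $g_1$ has a conical singularity of angle $2\pi(1-an_j)$ there (a cusp or complete end if $an_j\ge1$).

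\emph{The role of $d\le2$: identifying $g_1$.} Writing $g_1=e^{2\psi}|dz|^2$ in a conformal coordinate, $\psi$ is harmonic on the punctured plane, and after a M\"obius transformation we may place the punctures at $0$ and/or $\infty$. Since $an_1+an_2=2$ (read $n_2=0$, $an_1=2$, when $d=1$), the conical behaviour forces $\psi+an_1\log|z|$ (just $\psi$ when $d=1$) to be bounded near every puncture, hence to extend harmonically across them and be constant by Liouville's theorem. Thus $g_1=c\,|z|^{-2an_1}|dz|^2$ on $\CC^*$ when $d=2$ (punctures at $0,\infty$) and $g_1=c\,|dz|^2$ on $\CC$ when $d=1$ (puncture at $\infty$). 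Feeding this back, $u$ solves $\Delta_{\mathrm{euc}}u=-D\,|z|^{-2an_1}Q'(u)$ on the punctured plane (no weight when $d=1$), where $Q(u)=e^{au-\frac{a}{\tau^2}e^u}$, $D=\tfrac{c\tau^2}{a\varepsilon^2}$, and $u\sim 2n_j\log\mathrm{dist}$ at the punctures, interior elliptic regularity being applied to $u-2n_j\log|z-z_j|$.

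\emph{Two identities and the contradiction.} Testing this equation against $x\cdot\nabla u$ and integrating over an annulus $\{\rho<|z|<R\}$ (a Pohozaev identity), then letting $\rho\to0$, $R\to\infty$: the boundary terms tend to $\pm4\pi n_j^2$ (sign $+$ at a puncture sitting at $0$, sign $-$ at one sitting at $\infty$), using the logarithmic asymptotics of $u$, and the bulk term is a multiple of $\int_{S^2\setminus\{p_j\}}Q(u)\,\omega_{g_1}$. Combining with $aN=2$ gives, \emph{provided $n_1\neq n_2$} (this is exactly where one divides by $n_2-n_1$, which is why $d=2$, $n_1=n_2$ escapes), the identity $\int_{S^2\setminus\{p_j\}}Q(u)\,\omega_{g_1}=4\pi N\varepsilon^2/\tau^2$. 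On the other hand, $\omega_{g_1}=e^{-au}\omega_{g_2}$ together with the formula for $K_{g_2}$ yields the pointwise identity $Q'(u)\,\omega_{g_1}=\tfrac{2\varepsilon^2}{\tau^2}K_{g_2}\,\omega_{g_2}$, so Gauss--Bonnet for $g_2$ gives $\int_{S^2\setminus\{p_j\}}Q'(u)\,\omega_{g_1}=8\pi\varepsilon^2/\tau^2$. Since $Q'(u)=aQ(u)-\tfrac{a}{\tau^2}e^uQ(u)$ and $aN=2$, subtracting the two forces $\int_{S^2\setminus\{p_j\}}e^uQ(u)\,\omega_{g_1}=0$; but the integrand is strictly positive, a contradiction. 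This rules out $d=1$ and $d=2$ with $n_1\neq n_2$, and the theorem follows.

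I expect the real work to be in (i) the Liouville step --- establishing the sharp conical/cusp model for $g_1$ at each $p_j$, equivalently the expansion $u=2n_j\log\mathrm{dist}+O(1)$ with a controlled first derivative, and checking the removable-singularity and boundedness hypotheses --- and (ii) the Pohozaev boundary analysis, where one must verify that the subleading parts of $u$ and all cross terms drop out in the limit and that every integral in sight (in particular $\int Q(u)\,\omega_{g_1}$ and $\int e^uQ(u)\,\omega_{g_1}$) converges near the punctures.
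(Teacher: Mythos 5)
Your argument is correct in its essentials, and it reaches the conclusion by a genuinely different, more elementary route than the paper. The paper builds a Futaki-type character from $\dbar$-potentials of the holomorphic field $z\partial_z$, proves by a variational argument that it is independent of $(\eta,f)$, evaluates it at $f=\eta=0$ to get $ia\bigl(V-\tfrac{4\pi N}{\tau^2}\bigr)(N-2\ell)$, and kills the first factor with the Noguchi--Bradlow--Garc\'{\i}a-Prada volume inequality. You instead subtract $a$ times the first equation from the second to produce a metric $g_1=e^{-au}g_2$ that is flat with prescribed conical/cusp singularities, use $d\le 2$ plus Liouville to identify $g_1$ as $c|z|^{-2an_j}|dz|^2$ in a M\"obius-normalized chart, and then run a Pohozaev identity for the dilation field. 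I have checked the computation: the divergence factor is $2-2an_2=a(n_1-n_2)$ (not $2an_1$), the boundary contributions are $+4\pi n_1^2$ at the puncture at $\infty$ and $-4\pi n_2^2$ at the one at $0$ (your signs appear swapped, but the identity is antisymmetric so this is harmless), and the resulting identity is $4\pi(n_1^2-n_2^2)=\tfrac{(n_1-n_2)\tau^2}{\varepsilon^2}\int Q\,\omega_{g_1}$, which for $n_1\ne n_2$ gives exactly your $\int Q\,\omega_{g_1}=4\pi N\varepsilon^2/\tau^2$; combined with $\int Q'\,\omega_{g_1}=8\pi\varepsilon^2/\tau^2$ (Gauss--Bonnet for $g_2$, or equivalently the integrated first equation) this forces $\int e^uQ\,\omega_{g_1}=0$, a contradiction. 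The two proofs are secretly the same obstruction: since $Q\,\omega_{g_1}=e^\eta\omega_0$ pointwise, your Pohozaev identity says precisely that the conformal volume $\int e^\eta\omega_0$ equals the critical Bradlow value $4\pi N\varepsilon^2/\tau^2$, i.e.\ it is the vanishing of $\cF_{\ell,N,V,\tau}$; the Pohozaev/Kazdan--Warner identity for $r\partial_r$ is the real-variable avatar of the Futaki character evaluated at $z\partial_z$. What your route buys is a self-contained derivation for the given solution with no deformation-invariance argument and no complex line-bundle formalism; what it gives up is the case $d>2$ of Theorem \ref{th:AlGaGaPi}, where the configuration cannot be M\"obius-normalized to $\{0,\infty\}$ and the Liouville identification of $g_1$ fails. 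The genuine technical work is where you say it is: the expansion $u=2n_j\log\mathrm{dist}+O(1)$ with $C^{1,\alpha}$ remainder (standard elliptic regularity applied to $u-2n_j\log|z-p_j|$, bootstrapped to show $e^u$ and hence $g_2$ extend smoothly so Gauss--Bonnet applies), and the decay of the boundary terms, all of which check out.
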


Our main theorem gives infinitely many examples of configurations of points for which there cannot be solutions to the self-dual Einstein-Maxwell-Higgs equations on a compact surface, in apparent contradiction with \cite[Theorem 1.2]{HLS} (see Remark \ref{rem:HLS} and the Erratum \cite{HLSErratum}). Our proof exploits systematically the complex geometry of the \emph{Higgs field} $\phi$ in the self-dual Einstein-Maxwell-Higgs equations, which is implicit in the system of equations \eqref{eq:Yangreductiond=2} (see Section \ref{sec:pre}).

Theorem \ref{thm:dleq2} is a particular case of a more general result in previous work by the first four authors of this note, published online in \emph{Mathematische Annalen} in February 2020 (see \cite[Theorem 1.3 and Proposition 2.6]{AlGaGaPi}) and previously announced in \cite{AlGaGa2}. In the present setup, \cite[Theorem 1.3]{AlGaGaPi} can be stated as follows.

\begin{theorem}[\cite{AlGaGaPi}]\label{th:AlGaGaPi}

Suppose that \eqref{eq:Yangreduction2} admits a solution $(u,\eta)$ for some parameter $\varepsilon>0$ and some smooth background metric $g_0$ on $S$, where $\eta$ is a smooth function on $S$ and $u:S\backslash\{p_1, \cdots, p_d\}\to \mathbb{R}$ is a smooth function defined away from the points. Then, one of the following holds:
\begin{enumerate}
\item[\textup{(1)}] $d > 2$ and  $n_j < \frac{N}{2}$ for all $j$,
\item[\textup{(2)}] $d = 2$ and $2n_1 = 2n_2 = N$.
\end{enumerate}
\end{theorem}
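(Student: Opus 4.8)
The plan is to establish the dichotomy of Theorem~\ref{th:AlGaGaPi} (which contains Theorem~\ref{thm:dleq2}) by recovering from a solution the complex-geometric data of a gravitating vortex on $\mathbb{P}^1$ and running the easy direction of a Hitchin--Kobayashi-type correspondence: a solution can exist only if the divisor $D=\sum_j n_j\,p_j$ satisfies a GIT numerical condition, which is exactly the asserted dichotomy. First I would fix the complex structure $J$ on $S$ induced by $g=e^\eta g_0$, making $S$ a compact Riemann surface, and read the first equation of \eqref{eq:Yangreduction} as the abelian vortex equation: there is a holomorphic line bundle $L\cong\mathcal{O}_S(D)$ of degree $N$, a holomorphic section $\phi$ --- the Higgs field --- with $\operatorname{div}(\phi)=D$, and a Hermitian metric $h$ on $L$ with $|\phi|_h^2=e^u$ and $\Lambda_g F_h=\tfrac{1}{2\varepsilon^2}(\tau^2-e^u)$; thus the zeroes of $\phi$, with multiplicities, are exactly the $(p_j,n_j)$. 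Applying a maximum principle to the first equation at a maximum of $u$ (attained at a smooth point, since $u\to-\infty$ at each $p_j$) gives $e^u\le\tau^2$ everywhere. Substituting the first equation into the second, using $\Delta_g e^u=e^u\Delta_g u+e^u|\nabla_g u|^2$ away from the $p_j$ and passing to the limit by continuity, yields the pointwise identity
\[
K_g=\frac{a}{2\tau^2}\left[\frac{(\tau^2-e^u)^2}{\varepsilon^2}+e^u|\nabla_g u|^2\right]\ \ge\ 0,
\]
so $\chi(S)\ge 0$ by Gauss--Bonnet; and $K_g\not\equiv 0$ since $u$ is non-constant, which rules out the torus. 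Hence $S\cong S^2\cong\mathbb{P}^1$.

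Next I would extract the scalar constraint. Integrating the first equation over $(S,\omega_g)$ (its logarithmic singularities being harmless) gives $\int_S(e^u-\tau^2)\,\omega_g=-4\pi N\varepsilon^2$; integrating the second equation, where $\int_S\Delta_g e^u\,\omega_g=0$, then gives $\int_S K_g\,\omega_g=2\pi aN$, and comparison with $\int_S K_g\,\omega_g=2\pi\chi(\mathbb{P}^1)=4\pi$ forces $a=2/N$. With this the theorem reduces to a single assertion: $n_j\le N/2$ for all $j$, with equality for some $j$ only if $d=2$ (and then $n_1=n_2=N/2$). Indeed $a=2/N$ turns $n_j\le N/2$ into $a n_j\le 1$; for $d=2$ the relations $n_1+n_2=N$ and $n_1,n_2\le N/2$ force $n_1=n_2=N/2$; $d=1$ is impossible since then $n_1=N>N/2$; and for $d\ge 3$ the equality case is excluded by the characterisation, yielding $n_j<N/2$ for all $j$.

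The heart of the argument is this numerical inequality, where the complex geometry of $\phi$ is indispensable. The triple $(\mathbb{P}^1,L,\phi)$ is a point of an infinite-dimensional space of pairs on which the group of automorphisms acts with a moment map whose zero locus is, up to gauge, cut out exactly by the system \eqref{eq:Yangreduction}; hence a solution exists only if $(\mathbb{P}^1,L,\phi)$ is GIT-polystable. I would make this quantitative by a Matsushima/Hilbert--Mumford argument: for a one-parameter subgroup $\lambda$ of $\mathrm{PGL}(2,\mathbb{C})$ --- concretely, one fixing a pair of points and collapsing the remaining $p_i$ onto one of them --- evaluating the vanishing moment map against the generator of $\lambda$ and integrating by parts produces a Pohozaev-type identity whose boundary contributions at the $p_j$ are weighted by the vanishing orders $n_j$ of $\phi$; the identity says the Hilbert--Mumford weight of $\lambda$ is non-negative, which unwinds to $n_j\le\sum_{i\ne j}n_i=N-n_j$. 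Equality forces $\lambda$ to integrate to automorphisms of the solution, possible only when $d=2$ with the two weights balanced, i.e.\ $n_1=n_2$. For $d\le 2$ --- the setting of Theorem~\ref{thm:dleq2} --- this simplifies substantially, since only one one-parameter subgroup is relevant, namely the $\mathbb{C}^*$ fixing the $\le 2$ marked points, so a single weight computation suffices.

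The main obstacle is precisely this last implication, ``existence of a solution $\Rightarrow$ numerical inequality''. One must either set up the symplectic/moment-map picture for the coupled system carefully enough to invoke the correspondence --- controlling the singular behaviour of $\phi$ at the $p_j$ and the non-compactness of the configuration space --- or carry out the contracted integration by parts by hand and check that the boundary terms at the $p_j$, together with the curvature term supplied by \eqref{eq:Yangreduction}, assemble into the Hilbert--Mumford weight with the correct sign. The subtlest sub-point is the equality case: showing that $n_j=N/2$ forces $d=2$ requires recognising that equality makes the destabilising degeneration preserve the solution, hence produce an extra $\mathbb{C}^*$-symmetry, which only the two-point configuration can carry.
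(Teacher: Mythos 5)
Your overall strategy --- reduce to $S^2$ and $a=2/N$ by Gauss--Bonnet, then obstruct via the vanishing of a moment-map/Futaki-type pairing against a $\mathbb{C}^*$-action fixing the marked points --- is exactly the one the paper follows for the part it actually proves (the case $d\le 2$, Theorem \ref{thm:dleq2}); for general $d$ the paper does not prove the statement but cites \cite{AlGaGaPi}. Your preliminary reductions are correct: the pointwise identity $K_g=\tfrac{a}{2\tau^2}\bigl[\varepsilon^{-2}(\tau^2-e^u)^2+e^u|\nabla_g u|^2\bigr]$ checks out, the integrations give $\chi(S)=aN$ and hence $a=2/N$, and the reformulation of the dichotomy as ``$n_j\le N/2$ for all $j$, with equality only if $d=2$'' is right. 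However, the proposal stops exactly where the proof begins: you declare the implication ``solution $\Rightarrow$ numerical inequality'' to be the main obstacle and offer two possible routes without carrying out either. In particular, a concrete missing ingredient is the Bradlow--Noguchi--Garc\'ia-Prada volume constraint $\tau^2V>4\pi N$ (Lemma \ref{lem:BNGP} in the paper, and already implicit in your own integral identity $\int_S(e^u-\tau^2)\,\omega_g=-4\pi N\varepsilon^2$). The invariant one actually computes is, up to a positive constant, $(V-4\pi N/\tau^2)(N-2\ell)$ (Lemma \ref{lem:EMHFutakiev}); its vanishing alone does not ``unwind to $n_j\le N-n_j$'' --- without the strict volume inequality the factor $(V-4\pi N/\tau^2)$ could a priori vanish and no conclusion on the multiplicities would follow. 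Your sketch never isolates this sign input.

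A second, more structural gap concerns the case $d>2$. The Futaki invariant of Proposition \ref{prop:Futaki} is a character on \emph{infinitesimal automorphisms of the triple} $(\PP^1,\cO_{\PP^1}(N),\phi)$: the pairing with the vanishing moment map is only forced to be zero when the vector field preserves $\phi$ (up to scale), hence preserves the divisor $\sum_j n_jp_j$. A one-parameter subgroup of $\mathrm{PGL}(2,\CC)$ that ``collapses the remaining $p_i$ onto one of them'' fixes at most two points, so for $d\ge 3$ it does \emph{not} preserve the divisor; it is a genuine test-configuration-type degeneration, and the easy direction of the Kempf--Ness argument you invoke does not apply to it. Thus the mechanism you describe proves nothing in case (1), which is precisely the content of the cited result in \cite{AlGaGaPi} and requires a different argument (this is why the present note restricts itself to $d\le 2$, where the radial field $z\partial_z+\bar z\partial_{\bar z}$ genuinely is an infinitesimal automorphism and a single explicit weight computation, as in Lemma \ref{lem:EMHFutakiev}, suffices). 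Relatedly, your equality analysis (``equality forces $\lambda$ to integrate to automorphisms of the solution'') presupposes the full polystability correspondence, which is far from the ``easy direction'' and is not established here. In summary: correct skeleton, same approach as the paper where the paper gives a proof, but the decisive computation, the sign-providing Bradlow inequality, and any workable argument for $d>2$ are all absent.
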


The previous result is a converse to \emph{Yang's Existence Theorem} (see \cite[Theorem 1.2]{Yang} and \cite[Theorem 1.1]{Yang3}). Combined with \cite[Theorem 1.1]{FPY}, which carefully studies the possible volumes of the normalized metrics $\tfrac{1}{\varepsilon^2}g$ solving \eqref{eq:Yangreduction}, it gives a complete solution to the existence problem for the self-dual Einstein-Maxwell-Higgs equations on a compact surface.

Observe that Theorem \ref{thm:dleq2} follows directly by application of Theorem \ref{th:AlGaGaPi}, from the assumption $d \leq 2$. As a matter of fact, our method of proof here is, as in \cite{AlGaGaPi}, the evaluation of a \emph{Futaki invariant} for the self-dual Einstein-Maxwell-Higgs equations. This invariant was first constructed in \cite{AlGaGaPi} via the general theory for the K\"ahler-Yang-Mills equations developed in \cite{AlGaGa}, and follows the same principles as the classical Futaki invariant in the K\"ahler-Einstein theory \cite{Futaki1}. The main novelty of the present note is an alternative, independent construction of the Futaki invariant, by direct and explicit calculations. We have  deliberatively avoided the use of the general moment map theory in \cite{AlGaGa,AlGaGaPi} as well as any technicalities coming from the geometry of the self-dual Einstein-Maxwell-Higgs equations, trying to make the exposition self-contained and more accessible to the PDE community.

\section{An obstruction to the self-dual Einstein-Maxwell-Higgs equations}

\subsection{Preliminaries}\label{sec:pre}

Let $S$ be a compact connected smooth surface. We fix $\{p_1, \cdots, p_d\}$ a collection of distinct points on $S$ with positive integer multiplicities $n_1, \cdots, n_d \in \mathbb{N}$, respectively. We also fix positive real constants $\tau,\varepsilon, a > 0$. By Gauss-Bonnet Theorem, the existence of solutions of \eqref{eq:Yangreduction} implies that
$$
\chi(S) = a N > 0.
$$
Therefore, $S$ must be diffeomorphic to the two-sphere $S^2$ and furthermore one has the following `quantization condition'
$$
a = \frac{2}{N}.
$$
We fix a background metric $g_0$ on $S^2$ with volume $V$. One of the main methods introduced in \cite{AlGaGaPi} for the proof of Theorem \ref{th:AlGaGaPi} is the construction of an invariant, called the \emph{Futaki invariant}, which only depends on the conformal class of $g_0$, the volume $V$, and the tuple $(\{p_j\},\{n_j\},\tau,a)$, which vanishes identically provided that there exists a solution of \eqref{eq:Yangreduction2}. To recall the definition of the Futaki invariant, 
in this note we will focus on the special case 
$$
d \leq 2.
$$
Without loss of generality, we assume that $p_1$ and $p_2$ are the north and south pole on $S^2$, respectively, and take $g_0$ to be the round metric on $S^2$ with total volume $V$. Given a positive integer $0 \leq \ell \leq N$, we consider the following special case of equation \eqref{eq:Yangreduction2}
\begin{equation}\label{eq:Yangreductiond=2}
\begin{split}
\Delta_{g_0} u & = \frac{1}{\varepsilon^2}e^\eta(e^u - \tau^2 )+4\pi(N-\ell) \delta_{p_1} + 4\pi \ell \delta_{p_2},\\
\Delta_{g_0}(\eta + \tfrac{a}{\tau^2} e^u)  & = 2K_{g_0} + \frac{a}{\varepsilon^2}e^\eta (e^u - \tau^2).
\end{split}
\end{equation}
The construction of the Futaki invariant exploits systematically the complex geometry of the \emph{Higgs field} $\phi$ in the self-dual Einstein-Maxwell-Higgs equations, which is implicit in the system of equations \eqref{eq:Yangreductiond=2}. In order to use this geometry, in the sequel we identity the two-sphere with the complex Riemann sphere
$$
S^2 \cong \mathbb{P}^1.
$$
Taking the stereographic projection from the north pole, we have
\begin{equation}
\label{def:g0}
g_0 = \frac{V}{\pi}\frac{dx^2 + dy^2}{(1 + x^2 + y^2)^2}.
\end{equation} 
In the complex coordinate $z = x + i y$, the Higgs field of our interest is
\begin{equation}\label{eq:Higgs}
\phi = z^\ell.
\end{equation}
To define the Futaki invariant, we first write \eqref{eq:Yangreductiond=2} in a different form. Define a smooth function $\Phi$ by
$$
\Phi = \frac{|z|^{2\ell}}{(1+|z|^2)^N}.
$$
By a simple change of holomorphic coordinates $z \to z^{-1}$, it is easy to see that $\Phi$ extends to a smooth function on $\mathbb{P}^1$. More invariantly, $\phi$ defines a global holomorphic section of the line bundle $L = \mathcal{O}_{\mathbb{P}^1}(N)$ over $\mathbb{P}^1$, which vanishes at the points $p_1 \equiv \infty$ and $p_2 \equiv 0$ with multiplicities $N-\ell$ and $\ell$, respectively. In the holomorphic coordinate $z$ the Fubini-Study metric on $L$ reads
$$
h_{FS}^N = \frac{1}{(1+|z|^2)^N}
$$
and therefore $\Phi = |\phi|^2_{h_{FS}^N}$. For the next formula, notice that with our convention the Laplacian has negative eigenvalues. 

\begin{lemma}\label{lem:PoincareLelong}
$$
\Delta_{g_0} \log \Phi = - \frac{4\pi N}{V} + 4\pi(N-\ell) \delta_{p_1} + 4\pi \ell  \delta_{p_2}. 
$$ 
\end{lemma}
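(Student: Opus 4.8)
The identity is a Poincaré–Lelong-type formula, so the plan is to compute $\Delta_{g_0}\log\Phi$ directly in the coordinate $z=x+iy$, first on the punctured sphere $S^2\setminus\{p_1,p_2\}$, and then account for the Dirac contributions at $z=0$ and $z=\infty$. Writing $\log\Phi=\ell\log|z|^2-N\log(1+|z|^2)$, I would recall that on $\RR^2\setminus\{0\}$ one has $\Delta_{\mathrm{euc}}\log|z|^2=0$, and that $\Delta_{\mathrm{euc}}\log(1+|z|^2)$ equals $4/(1+|z|^2)^2$ by a routine computation (using $\Delta_{\mathrm{euc}}=4\,\partial_z\partial_{\bar z}$ and $\partial_z\partial_{\bar z}\log(1+|z|^2)=(1+|z|^2)^{-2}$). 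Since $\Delta_{g_0}=e^{-\eta_0}\Delta_{\mathrm{euc}}$ where, from \eqref{def:g0}, the conformal factor is $e^{\eta_0}=\frac{V}{\pi}(1+|z|^2)^{-2}$, the second term becomes $-N\Delta_{g_0}\log(1+|z|^2)=-N\cdot\frac{\pi}{V}(1+|z|^2)^2\cdot\frac{4}{(1+|z|^2)^2}=-\frac{4\pi N}{V}$ away from the poles. This gives the smooth part of the claimed formula.

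Next I would identify the distributional corrections. The term $\ell\log|z|^2$ is harmonic on $\RR^2\setminus\{0\}$ but, as a distribution on $\RR^2$, satisfies $\Delta_{\mathrm{euc}}\log|z|^2=4\pi\delta_0$ (the standard fundamental-solution identity, checked by integrating against a test function and using Green's formula on an annulus $\epsilon\le|z|\le R$). Since $\Delta_{g_0}$ differs from $\Delta_{\mathrm{euc}}$ only by multiplication by the smooth positive function $e^{-\eta_0}$, and since the Dirac distribution here is taken with respect to $\omega_0=e^{\eta_0}dx\,dy$, this contributes exactly $4\pi\ell\,\delta_{p_2}$ with $p_2\equiv 0$: the conformal factor cancels against the change in reference volume form. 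Symmetrically, near $p_1\equiv\infty$ I would pass to the coordinate $w=z^{-1}$, in which $\Phi=|w|^{2(N-\ell)}(1+|w|^2)^{-N}$, so the same local analysis yields the term $4\pi(N-\ell)\delta_{p_1}$; the smooth part $-4\pi N/V$ is coordinate-independent and matches. Adding the three pieces gives the lemma.

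The only genuinely delicate point is the bookkeeping of the Dirac masses under the conformal change: one must be careful that $\delta_{p_j}$ in the statement is normalized against $\omega_0$ (as stated in the text), not against Lebesgue measure $dx\,dy$, so that $\int_S f\,\delta_{p_j}\,\omega_0=f(p_j)$. Writing $\Delta_{g_0}=e^{-\eta_0}\Delta_{\mathrm{euc}}$ and testing against $f\,\omega_0=f\,e^{\eta_0}\,dx\,dy$, the factor $e^{-\eta_0}$ is absorbed and one is reduced precisely to the Euclidean computation $\int f\,\Delta_{\mathrm{euc}}\log|z|^2\,dx\,dy=4\pi f(0)$, which is why the conformal factor does not appear in front of the delta terms. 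Everything else is a short explicit calculation, and no deeper input is needed. \qed
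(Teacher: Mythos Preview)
Your argument is correct. The paper's proof is a one-line appeal to the general Poincar\'e--Lelong formula for a holomorphic section $\phi$ of a Hermitian line bundle: it writes $i\partial\bar\partial\log|\phi|^2_{h_{FS}^N}=-iF_{h_{FS}^N}+2\pi(N-\ell)\delta_{p_1}+2\pi\ell\,\delta_{p_2}$ and then identifies the curvature term with $-N\,\omega_{FS}$, from which the claim follows after converting $2i\partial\bar\partial$ to $\Delta_{g_0}\cdot\omega_0$. You instead carry out the computation by hand: you split $\log\Phi=\ell\log|z|^2-N\log(1+|z|^2)$, compute each Euclidean Laplacian explicitly, pass through the conformal factor, and verify the Dirac contributions at $0$ and $\infty$ separately (the latter via the chart $w=z^{-1}$), taking care with the normalization of $\delta_{p_j}$ against $\omega_0$. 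This is exactly what underlies the Poincar\'e--Lelong formula in this concrete situation, so the two arguments are the same in spirit; yours is more self-contained and accessible to a reader unfamiliar with the line-bundle formulation, while the paper's is more concise and makes the geometric origin of the identity transparent.
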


\begin{proof}
The claim follows now from the Poincar\'e-Lelong formula
\begin{align*}
i \partial \dbar \log |\phi|^2_{h_{FS}^N} & = - i F_{h_{FS}^N} + 2\pi(N-\ell) \delta_{p_1} + 2\pi \ell  \delta_{p_2}\\
& = - N \frac{i dz \wedge d \overline{z}}{(1+|z|^2)^2} + 2\pi(N-\ell) \delta_{p_1} + 2\pi \ell\delta_{p_2}.
\end{align*}
\end{proof}

Using the unknown function $u$ in \eqref{eq:Yangreductiond=2}, we define a smooth function on the sphere by
$$
f = u - \log \Phi.
$$
Using the previous Lemma, the system \eqref{eq:Yangreductiond=2} now reads
\begin{equation}\label{eq:Yangreductiond=2BIS}
\begin{split}
\Delta_{g_0} f & = \frac{1}{\varepsilon^2}e^\eta(e^f \Phi - \tau^2 ) + \frac{4\pi N}{V},\\
\Delta_{g_0}(\eta + \tfrac{a}{\tau^2} e^f \Phi)  & = 2K_{g_0} + \frac{a}{\varepsilon^2}e^\eta (e^f \Phi - \tau^2).
\end{split}
\end{equation}
Consider the radial vector field
$$
\bm v = \bm v^{1,0} +\bm v^{0,1} = z \partial_z + \overline{z} \partial_{\overline{z}}.
$$
Of course, this is the local expression of a globally defined real holomorphic vector field on $\PP^1$ whose flow evolves along the meridians of $S^2 \cong \PP^1$, and vanishes at the poles. Let $\eta$ be a smooth function on $\PP^1$. Using that $\bm v$ is holomorphic we have
$$
\dbar (e^\eta \iota_{\bm v^{1,0}} \omega_0) = 0.
$$
Since $\PP^1$ is simply connected, the Hodge decomposition of the de Rham cohomology ensures the existence of a global $\dbar$-potential for $e^\eta \iota_{\bm v^{1,0}}\omega_0$.

\begin{definition}
Let $\eta$ be a smooth real function on $\PP^1$. We define $\varphi_\eta$ as the unique smooth complex-valued function on $\mathbb{P}^1$ satisfying
$$
\dbar \varphi_\eta = e^\eta \iota_{\bm v^{1,0}}\omega_0, \qquad \int_{\mathbb{P}^1} \varphi_\eta e^\eta \omega_0 = 0.
$$ 
\end{definition}

We next construct a complex function $\psi_f$ associated to the other unknown of the equation \eqref{eq:Yangreductiond=2BIS}. Let $\omega_{FS}$ denote the Fubini-Study K\"ahler form
$$
\omega_{FS} := i \dbar \partial \log h_{FS}^1 = \frac{i dz \wedge d \overline{z}}{(1+|z|^2)^2}.
$$
Given a smooth function $f$ on $\PP^1$, similarly as before we have
$$
\dbar(\iota_{\bm v^{1,0}} (N\omega_{FS} + i \dbar \partial f)) = 0 
$$
and hence $\iota_{\bm v^{1,0}} (N\omega_{FS} + i \dbar \partial f)$ admits a smooth global $\dbar$-potential. The choice of normalization for this potential is a delicate point, which we consider next. Geometrically, this is choosen so that $(\bm v^{1,0},\psi_f)$ defines an \emph{infinitesimal automorphism} of the Higgs field $\phi = z^\ell$, in the following sense. 

\begin{lemma}\label{eq:Psif}
Let $f$ be a smooth real function on $\PP^1$. We define $\psi_f$ as the smooth complex-valued function on $\mathbb{P}^1$ given by
$$
\psi_f = \ell - N \frac{|z|^2}{1 + |z|^2} + \partial f(\bm v^{1,0}).
$$
Then, if we set $h = e^f h_{FS}^N$, the following identities hold
\begin{equation}\label{eq:potentialfstructure}
\begin{split}
- i \dbar \psi_f &= \iota_{\bm v^{1,0}} (i \dbar \partial \log h),\\
\iota_{\bm v^{1,0}} \left( \partial \phi + (\partial \log h)\phi \right) & = \psi_f \phi.
\end{split}
\end{equation}
\end{lemma}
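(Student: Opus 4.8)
The plan is to verify both identities of \eqref{eq:potentialfstructure} by a direct computation in the affine coordinate $z$, and then to check separately that the formula defining $\psi_f$ descends to a smooth function on all of $\PP^1$. To set up, I would write $\log h = f + \log h_{FS}^N = f - N\log(1+|z|^2)$, so that $\partial\log h = \partial f - N\tfrac{\bar z\, dz}{1+|z|^2}$, and record the consequence of $\omega_{FS} = i\dbar\partial\log h_{FS}^1$ that
$$
i\dbar\partial\log h = N\omega_{FS} + i\dbar\partial f .
$$
Thus the right-hand side of the first identity is $\iota_{\bm v^{1,0}}(N\omega_{FS} + i\dbar\partial f)$; contracting a $(1,1)$-form against $\bm v^{1,0} = z\partial_z$ is immediate via $\iota_{z\partial_z}(dz\wedge d\bar z) = z\, d\bar z$, and one gets
$$
\iota_{\bm v^{1,0}}\bigl(N\omega_{FS} + i\dbar\partial f\bigr) = \frac{iNz}{(1+|z|^2)^2}\, d\bar z - iz f_{z\bar z}\, d\bar z .
$$

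For the left-hand side I would first rewrite $-N\tfrac{|z|^2}{1+|z|^2} = -N + \tfrac{N}{1+|z|^2}$, so that $\psi_f = (\ell - N) + \tfrac{N}{1+|z|^2} + z f_z$; only the last two terms contribute to $\dbar\psi_f$, giving $\dbar\psi_f = -\tfrac{Nz}{(1+|z|^2)^2}\, d\bar z + z f_{z\bar z}\, d\bar z$, hence $-i\dbar\psi_f$ equals the expression just computed. This is the first identity. The second identity is even more direct: $\iota_{\bm v^{1,0}}\partial\phi = \iota_{z\partial_z}(\ell z^{\ell-1}\, dz) = \ell z^\ell = \ell\phi$, while $\iota_{\bm v^{1,0}}(\partial\log h) = z f_z - N\tfrac{|z|^2}{1+|z|^2}$; adding these and factoring out $\phi = z^\ell$ produces $\bigl(\ell - N\tfrac{|z|^2}{1+|z|^2} + \partial f(\bm v^{1,0})\bigr)\phi = \psi_f\phi$.

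It remains to see that $\psi_f$ is a globally defined smooth function on $\PP^1$, for which I would pass to $w = z^{-1}$: there $\bm v^{1,0} = -w\partial_w$, the term $\tfrac{|z|^2}{1+|z|^2}$ becomes $\tfrac{1}{1+|w|^2} = 1 - \tfrac{|w|^2}{1+|w|^2}$, and $\partial f(\bm v^{1,0})$ takes the form $-w\, \partial_w f$, so near $w = 0$ one finds $\psi_f = (\ell - N) + N\tfrac{|w|^2}{1+|w|^2} - w\, \partial_w f$, which is manifestly smooth; together with smoothness on the $z$-chart this gives a well-defined $\psi_f\in C^\infty(\PP^1,\CC)$ (with $\psi_f(p_1) = -(N-\ell)$ and $\psi_f(p_2) = \ell$, the orders of vanishing of $\phi$ up to sign).

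None of these steps is analytically hard; the one genuinely delicate point — flagged just before the statement — is the choice of \emph{normalization}, i.e. the precise additive constant $\ell$ and the precise multiple $N\tfrac{|z|^2}{1+|z|^2}$. The first identity alone determines $\psi_f$ only up to an additive holomorphic, hence constant, term and a global choice of $\dbar$-potential, so it cannot be what fixes the formula; it is the second identity — the assertion that $(\bm v^{1,0},\psi_f)$ is an infinitesimal automorphism of the Higgs pair $(L,\phi)$ with $L = \mathcal{O}_{\PP^1}(N)$ — that pins it down. For this reason I would, if anything, prove the second identity first and read off the definition of $\psi_f$ from it. The only remaining thing to watch is consistency of sign conventions (the ordering $\dbar\partial$ rather than $\partial\dbar$, and the negative-spectrum Laplacian), which I would settle once at the outset.
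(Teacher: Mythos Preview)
Your proposal is correct and follows essentially the same approach as the paper: a direct coordinate computation of $\dbar\psi_f$ and of $\iota_{\bm v^{1,0}}(\partial\phi + (\partial\log h)\phi)$ in the affine chart $z$. Your explicit check of smoothness at $w=z^{-1}$ and your remarks on normalization go slightly beyond what the paper writes (it simply asserts that $\psi_0$ is a smooth global function), but the underlying argument is the same.
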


\begin{proof}
Observe that $\psi_0 = \ell - N \frac{|z|^2}{1 + |z|^2}$ defines a smooth global function on $\PP^1$, which satisfies
$$
\dbar \psi_0 = - N \frac{z d \overline{z}}{(1+|z|^2)^2} = iN \iota_{\bm v^{1,0}}\omega_{FS}.
$$
From this, using that $\bm v$ is holomorphic we obtain
$$
\dbar \psi_f = iN \iota_{\bm v^{1,0}}\omega_{FS} - \iota_{\bm v^{1,0}} \dbar \partial f.
$$
Similarly
$$
\iota_{\bm v^{1,0}} \left( \partial \phi + (\partial \log e^f h_{FS}^N)\phi \right) = \iota_{\bm v^{1,0}}\Bigg{(}\ell z^{\ell - 1} dz- N\frac{\overline{z}dz}{1+ |z|^2} \phi + (\partial f)\phi \Bigg{)} = \psi_f \phi.
$$
\end{proof}

We will also need the following technical formula.

\begin{lemma}\label{lem:dbarPsifmixed}
Set $h = e^f h_{FS}^N$. Then the following identity holds
$$
\dbar \left( \psi_f(e^f\Phi - \tau^2) \right) =  - \iota_{\bm v^{1,0}} \left(\dbar \partial (e^f\Phi) - \tau^2 \dbar \partial \log h \right).
$$
\end{lemma}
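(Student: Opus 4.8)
The plan is to recognize the identity as the outcome of applying $\dbar$ to the ``infinitesimal automorphism'' relation recorded in the second line of \eqref{eq:potentialfstructure}. Set $h = e^f h_{FS}^N$, and recall from Section~\ref{sec:pre} that $e^f\Phi = |\phi|^2_h$. The key preliminary observation is the identity
\[
\iota_{\bm v^{1,0}}\,\partial(e^f\Phi) = \psi_f\,e^f\Phi .
\]
Indeed, in the local trivialization with $\phi = z^\ell$ one has $e^f\Phi = \phi\,\overline\phi\,h$ with $\overline\phi$ antiholomorphic, so $\partial(e^f\Phi) = \overline\phi\,h\,\big(\partial\phi + (\partial\log h)\phi\big)$; contracting with $\bm v^{1,0}$ and using $\iota_{\bm v^{1,0}}\big(\partial\phi + (\partial\log h)\phi\big) = \psi_f\phi$ from \eqref{eq:potentialfstructure} gives $\iota_{\bm v^{1,0}}\partial(e^f\Phi) = \overline\phi\,h\,\psi_f\phi = \psi_f\,|\phi|^2_h = \psi_f\,e^f\Phi$.

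Next I would apply $\dbar$ to this identity. Since $\bm v^{1,0}$ is a holomorphic $(1,0)$ vector field, contraction with it anticommutes with $\dbar$ on forms, i.e.\ $\dbar\iota_{\bm v^{1,0}} = -\iota_{\bm v^{1,0}}\dbar$ (a one-line verification in the coordinate $z$, or the standard consequence of $\mathcal{L}_{\bm v^{1,0}}$ preserving the bidegree together with Cartan's formula). Hence $\dbar$ of the left-hand side equals $-\iota_{\bm v^{1,0}}\dbar\partial(e^f\Phi)$, while $\dbar$ of the right-hand side equals $(\dbar\psi_f)\,e^f\Phi + \psi_f\,\dbar(e^f\Phi)$; substituting the first line of \eqref{eq:potentialfstructure} rewritten as $\dbar\psi_f = -\iota_{\bm v^{1,0}}\dbar\partial\log h$ gives the auxiliary identity
\[
\iota_{\bm v^{1,0}}\dbar\partial(e^f\Phi) \;=\; (e^f\Phi)\,\iota_{\bm v^{1,0}}\dbar\partial\log h \;-\; \psi_f\,\dbar(e^f\Phi).
\]

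Finally I would expand the left-hand side of the lemma by the Leibniz rule,
\[
\dbar\big(\psi_f(e^f\Phi - \tau^2)\big) = (\dbar\psi_f)(e^f\Phi-\tau^2) + \psi_f\,\dbar(e^f\Phi) = -\iota_{\bm v^{1,0}}(\dbar\partial\log h)\,(e^f\Phi-\tau^2) + \psi_f\,\dbar(e^f\Phi),
\]
and then use the auxiliary identity above to replace $\psi_f\,\dbar(e^f\Phi)$; the two terms proportional to $(e^f\Phi)\,\iota_{\bm v^{1,0}}\dbar\partial\log h$ cancel, and what remains is exactly $\tau^2\,\iota_{\bm v^{1,0}}\dbar\partial\log h - \iota_{\bm v^{1,0}}\dbar\partial(e^f\Phi) = -\iota_{\bm v^{1,0}}\big(\dbar\partial(e^f\Phi) - \tau^2\,\dbar\partial\log h\big)$, which is the claim. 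The only genuinely delicate point is the first identity above, i.e.\ keeping straight that $\partial(e^f\Phi)$ equals $\overline\phi\,h$ times the Chern-covariant $(1,0)$-derivative of $\phi$, so that the automorphism relation in \eqref{eq:potentialfstructure} can be invoked; everything afterwards is bookkeeping. (Alternatively one can verify the identity by a direct computation in the coordinate $z$ starting from $\psi_f = \ell - N|z|^2/(1+|z|^2) + z f_z$ and $\partial\log h = f_z\,dz - N\overline z\,dz/(1+|z|^2)$, but the argument above is shorter and reuses what has already been established.)
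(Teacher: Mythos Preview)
Your proof is correct and uses the same ingredients as the paper: the key identity $\iota_{\bm v^{1,0}}\partial(e^f\Phi)=\psi_f\,e^f\Phi$ coming from the second line of \eqref{eq:potentialfstructure}, the first line of \eqref{eq:potentialfstructure} for $\dbar\psi_f$, and the anticommutation $\dbar\iota_{\bm v^{1,0}}=-\iota_{\bm v^{1,0}}\dbar$. The only difference is organizational: the paper substitutes $\psi_f\,e^f\Phi=\iota_{\bm v^{1,0}}\partial(e^f\Phi)$ \emph{before} applying $\dbar$, so that $\dbar\big(\psi_f(e^f\Phi-\tau^2)\big)=\dbar\big(\iota_{\bm v^{1,0}}\partial(e^f\Phi)\big)-\tau^2\dbar\psi_f$ leads directly to the result in one line, whereas you expand by Leibniz first and then pass through an auxiliary identity, producing a cancellation of the $(e^f\Phi)\,\iota_{\bm v^{1,0}}\dbar\partial\log h$ terms that the paper's ordering avoids.
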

\begin{proof}
Applying the structural formulae for $\psi_f$ in Lemma \ref{eq:Psif}, we obtain
\begin{align*}
\dbar \left( \psi_f(e^f\Phi - \tau^2) \right) &= \dbar \left( \psi_f\phi h \overline{\phi} - \tau^2 \psi_f \right)\\
& = \dbar \left( \iota_{\bm v^{1,0}}\left( \partial \phi + (\partial \log h)\phi \right)h \overline{\phi} \right) + \tau^2  \iota_{\bm v^{1,0}} (\dbar \partial \log h)\\
& = - \iota_{\bm v^{1,0}} \left( \dbar \partial (e^f\Phi) - \tau^2 \dbar \partial \log h \right).
\end{align*}
\end{proof}

\subsection{A Futaki invariant for the self-dual Einstein-Maxwell-Higgs equations}

In this section we introduce the definition of the Futaki invariant for the self-dual Einstein-Maxwell-Higgs equations, following \cite{AlGaGaPi} (see Remark \ref{rmk:Futaki}). As a warm-up, we recall first the definition of the classical Futaki invariant in our setup. This classical invariant, originally introduced by A. Futaki \cite{Futaki1}, provides an obstruction to the existence of K\"ahler-Einstein metrics on a compact complex manifold. Since we are dealing with the Riemann sphere, we will simply prove that the classical Futaki invariant identically vanishes.

\begin{lemma}[Classical Futaki invariant]\label{lem:Futakiclas}
The following expression does not depend on the choice of smooth function $\eta$:
\begin{align*}
\cF_{0} = -   \int_{S^2} \varphi_\eta \Big{(}2K_{g_0} - \Delta_{g_0} \eta \Big{)} \omega_0.
\end{align*}
Consequently, it vanishes identically.
\end{lemma}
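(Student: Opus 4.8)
The plan is to prove the stronger statement that $\cF_{0}$ vanishes for \emph{every} smooth $\eta$; independence of $\eta$ then follows at once. Write $g=e^{\eta}g_0$, let $\omega_g=e^{\eta}\omega_0$ be its area form, and set $V_\eta:=\int_{\PP^1}e^{\eta}\omega_0$ (so $V_0=V$). By the conformal change formula from the introduction, $(2K_{g_0}-\Delta_{g_0}\eta)\,\omega_0=2K_{g}\,\omega_{g}$, twice the Ricci form of $g$; moreover $\dbar\varphi_\eta=e^{\eta}\iota_{\bm v^{1,0}}\omega_0=\iota_{\bm v^{1,0}}\omega_g$ and $\int_{\PP^1}\varphi_\eta\,\omega_g=0$, so $\cF_0$ is (a constant multiple of) the classical Futaki invariant of $\bm v^{1,0}$ for $\omega_g$. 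Since $g_0$ is the round metric, $2K_{g_0}$ is constant, hence $\cF_0=-2K_{g_0}\int_{\PP^1}\varphi_0\,\omega_0=0$ at $\eta=0$. Finally, uniqueness in the definition of $\varphi_\eta$ forces $\varphi_{\eta+c}=e^{c}\varphi_\eta$ for constants $c$, and since $\Delta_{g_0}(\eta+c)=\Delta_{g_0}\eta$ this yields the scaling identity $\cF_0(\eta+c)=e^{c}\cF_0(\eta)$.

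The heart of the proof is to differentiate $\cF_0$ along a smooth path $\eta_t$ (a dot denotes $\partial_t$). Differentiation and self-adjointness of $\Delta_{g_0}$ on $\PP^1$ give
\[
\tfrac{d}{dt}\cF_0=-2K_{g_0}\!\int_{\PP^1}\!\dot\varphi\,\omega_0+\int_{\PP^1}\!\eta\,(\Delta_{g_0}\dot\varphi)\,\omega_0+\int_{\PP^1}\!\dot\eta\,(\Delta_{g_0}\varphi_\eta)\,\omega_0 .
\]
Applying $\partial$ to $\dbar\varphi_\eta=e^{\eta}\iota_{\bm v^{1,0}}\omega_0$ and to its $t$-derivative, and using $\partial\,\iota_{\bm v^{1,0}}\omega_0=\cL_{\bm v^{1,0}}\omega_0=H\omega_0$ (computed directly, with $H=\tfrac{1-|z|^2}{1+|z|^2}$) together with $\partial\eta\wedge\iota_{\bm v^{1,0}}\omega_0=(\bm v^{1,0}\eta)\omega_0$, one obtains $\Delta_{g_0}\varphi_\eta=2i\,e^{\eta}(\bm v^{1,0}\eta+H)$ and $\Delta_{g_0}\dot\varphi=2i\,(\bm v^{1,0}(e^{\eta}\dot\eta)+e^{\eta}\dot\eta\,H)$. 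Substituting, and integrating by parts once more via $\int_{\PP^1}\bm v^{1,0}(F)\,\omega_0=-\int_{\PP^1}F H\,\omega_0$, the terms involving $\bm v^{1,0}\eta$ cancel and one is left with $\tfrac{d}{dt}\cF_0=-2K_{g_0}\int_{\PP^1}\dot\varphi\,\omega_0+2i\int_{\PP^1}\dot\eta\,e^{\eta}H\,\omega_0$.

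Here $\int_{\PP^1}\dot\varphi\,\omega_0$ cannot be integrated by parts directly, because $\omega_0$ is not $\partial\dbar$-exact; so I would bring in the mean-zero solution $\xi_\eta$ of $\Delta_{g_0}\xi_\eta=e^{\eta}-\tfrac{V_\eta}{V}$ (solvable, the right-hand side having zero $\omega_0$-average), for which $\omega_0-\tfrac{V}{V_\eta}\omega_g=-\tfrac{2iV}{V_\eta}\partial\dbar\xi_\eta$. Rewriting $\int\dot\varphi\,\omega_0$ through this, using $\dbar\dot\varphi=\dot\eta e^{\eta}\iota_{\bm v^{1,0}}\omega_0$, the differentiated normalisation $\int\dot\varphi\,\omega_g=-\int\varphi_\eta\dot\eta\,\omega_g$, and $\iota_{\bm v^{1,0}}\omega_0\wedge\partial\xi_\eta=-(\bm v^{1,0}\xi_\eta)\omega_0$, one reaches
\[
\tfrac{d}{dt}\cF_0=\int_{\PP^1}B_\eta\,e^{\eta}\dot\eta\,\omega_0,\qquad B_\eta:=\tfrac{8\pi}{V_\eta}\varphi_\eta-\tfrac{16\pi i}{V_\eta}\,\bm v^{1,0}\xi_\eta+2iH .
\]
The step truly special to $\PP^1$ is that $B_\eta$ is constant: from $\dbar\varphi_\eta=e^{\eta}\iota_{\bm v^{1,0}}\omega_0$, $\dbar(\bm v^{1,0}\xi_\eta)=\tfrac{1}{2i}(\Delta_{g_0}\xi_\eta)\,\iota_{\bm v^{1,0}}\omega_0$ and $\dbar H=\tfrac{4\pi i}{V}\iota_{\bm v^{1,0}}\omega_0$ one checks $\dbar B_\eta=0$, and a holomorphic function on $\PP^1$ is constant.

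Averaging $B_\eta$ against $\omega_g$ and using $\int\varphi_\eta\,\omega_g=0$ (this is where the normalisation of $\varphi_\eta$ enters) identifies the constant as $B_\eta=\cF_0(\eta)/V_\eta$. Hence $\tfrac{d}{dt}\cF_0(\eta_t)=\cF_0(\eta_t)\,\tfrac{d}{dt}\log V_{\eta_t}$, so $\cF_0/V_\eta$ is constant along any path; since $C^\infty(\PP^1;\RR)$ is connected and $\cF_0=0$ at $\eta=0$, we conclude $\cF_0\equiv0$, and in particular $\cF_0$ does not depend on $\eta$, which is what the lemma asserts. (Alternatively, one may avoid computing the value of the constant: restricting to volume-preserving paths makes $\int e^{\eta}\dot\eta\,\omega_0=0$ and hence $\tfrac{d}{dt}\cF_0=0$, so $\cF_0$ is constant on each volume level set, and the scaling identity then reduces any $\eta$ to $\eta=0$.) I expect the main obstacle to be precisely the bookkeeping in the variational computation: keeping track of the chain of integrations by parts and their signs, handling the non-$\partial\dbar$-exactness of $\omega_0$ via the auxiliary potential $\xi_\eta$, and recognising the combination $B_\eta$ and verifying $\dbar B_\eta=0$.
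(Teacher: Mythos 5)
Your argument is correct, and it reaches the lemma by a genuinely different route than the paper, although the top-level skeleton is the same (a variational argument for independence, vanishing at $\eta=0$ from constant curvature plus the normalisation $\int_{S^2}\varphi_0\,\omega_0=0$, and the scaling $\varphi_{\eta+c}=e^c\varphi_\eta$). The paper restricts to the fixed-volume slice $\int_{S^2}e^\eta\omega_0=V$, parametrises it by K\"ahler potentials $\omega_t=\omega+2i\partial\dbar u_t$, and runs the classical Futaki computation: $\dot\varphi_g=2i\,\bm v^{1,0}(\dot u)$, $\dot\rho_g=i\dbar\partial\Delta_g\dot u$, and the Bochner-type identity $\dbar\Delta_g\varphi_g=-2\iota_{\bm v^{1,0}}\rho_g$, after which the first-order variation cancels term by term; this is the argument that generalises to any K\"ahler class on any compact K\"ahler manifold. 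You instead vary $\eta$ directly over \emph{all} conformal factors, work throughout with the round background metric, and package the variation as $\tfrac{d}{dt}\cF_0=\int B_\eta\,e^\eta\dot\eta\,\omega_0$ with $B_\eta$ forced to be constant because $\dbar B_\eta=0$ on $\PP^1$; the resulting ODE $\tfrac{d}{dt}\cF_0=\cF_0\,\tfrac{d}{dt}\log V_\eta$ delivers independence and vanishing in one stroke. This buys a more elementary, explicitly two-dimensional proof (no curvature tensor identities, only Stokes and ``holomorphic implies constant''), at the price of being tied to the explicit geometry of $(\PP^1,\bm v,g_0)$ through the function $H$ and the relation $\dbar H=\tfrac{4\pi i}{V}\iota_{\bm v^{1,0}}\omega_0$, which encodes the constant curvature of $g_0$. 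I checked your intermediate formulas ($\Delta_{g_0}\varphi_\eta=2ie^\eta(\bm v^{1,0}\eta+H)$, the cancellation of the $\bm v^{1,0}\eta$ terms, $\dbar B_\eta=0$, and the identification $B_\eta V_\eta=\cF_0(\eta)$) and they are all consistent with the paper's sign conventions $\Delta_{g_0}f\,\omega_0=2i\partial\dbar f$ and $K_{g_0}=4\pi/V$. The only step you assert with less than full justification is $B_\eta=\cF_0(\eta)/V_\eta$, which needs two further integrations by parts beyond the normalisation of $\varphi_\eta$ (to convert $\int(\bm v^{1,0}\xi_\eta)\omega_g$ and $\int H\omega_g$ into $-2K_{g_0}\int\varphi_\eta\omega_0$ and $\int\varphi_\eta\Delta_{g_0}\eta\,\omega_0$ respectively); these do work out, and your fallback via volume-preserving paths plus the scaling identity sidesteps the issue entirely and then coincides with the paper's logical structure.
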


\begin{proof}
We claim that $\cF_0 = \cF_0(\eta)$ is independent of $\eta$, provided that $\int_{S^2} e^\eta \omega_0 = V$. If this is the case, given such $\tilde \eta$ and using that $g_0$ has constant curvature
$$
\cF_{0}(\tilde \eta) = -  2K_{g_0} \int_{S^2} \varphi_0  \omega_0 = 0
$$ 
by definition of $\varphi_0$. Furthermore,
$$
\varphi_{\eta + \log t} = t \varphi_{\eta}
$$
for any $\eta$, which implies $\cF_0(\tilde \eta + \log t) = t \cF_0(\tilde \eta) = 0$.

To prove our initial claim, we set $g := e^\eta g_0$ and $\varphi_g : = \varphi_\eta$, and note that
$$
 \cF_0(g):= \cF_{0}(\eta) = - 2\int_{S^2} \varphi_g \Big{(}K_{g} - \hat K \Big{)} \omega = - 2\int_{S^2} \varphi_g \Big{(}\rho_{g} - \hat K \omega \Big{)}
$$
where $\hat K = 4\pi/V$, $\omega$ denotes the volume form corresponding to $g$, and $\rho_{g} = -i\partial\bar\partial \log \det g$ denotes the Ricci form. Using that $\int_{S^2} \omega = V$ we can write 
$$
\omega = e^\eta \omega_0 = (1 + \Delta_{g_0} u) \omega_0
$$
for some smooth function $u$. Since the space of metrics conformal to $g_0$ and with fixed volume is path-connected, it suffices to prove that the variation of $\cF_0 = \cF_0(u)$ with respect to $u$ vanishes identically. Taking a path $\omega_t = \omega +  2i \partial \dbar u_t$ and denoting $\frac{d}{dt}u_t=\dot u$, we calculate
$$
\dot \varphi_g = 2i \bm v^{1,0}(\dot u), \qquad \dot \rho_{g} = i \dbar \partial \Delta_g \dot u.
$$
The second identity is straightforward. As for the first identity, from the defining equation (we set $\varphi = \varphi_g$ for simplicity)
\[
\varphi_{\bar\lambda} = i g_{\mu\bar\lambda}v^\mu, 
\]
taking the derivative with respect to $t$, we get 
\[
\dot\varphi_{\bar\lambda}
=
2i \dot u_{\mu\bar\lambda} v^\mu
=
2i (v^\mu \dot u_\mu)_{\bar\lambda}
\]
by the holomorphicity of the vector field $\bm v^{1,0}=v^\mu\partial_\mu$, thus $\dot \varphi= 2i \bm v^{1,0}(\dot u)+c$ for some constant $c$. 	
On the other hand, the normalization condition $\int_{S^2} \varphi \omega = 0$ implies 
\[
0
=
\int_{S^2} \dot \varphi \omega 
+ \varphi \cdot 2i\partial\bar\partial \dot u
=
\int_{S^2} \dot\varphi \omega - 2i \partial \dot u \wedge \bar\partial \varphi
= 
\int_{S^2} 2i v^\mu \dot u_\mu \cdot i g_{z\bar z}dz\wedge d\bar z + c\omega 
- 
2i \int_{S^2} \dot u_z i g_{z\bar z}v^z dz\wedge d\bar z, 
\]
therefore $c=0$ and $\dot \varphi_g = 2i v^\mu \dot u_\mu=2i \bm v^{1,0}(\dot u)$. 

We will also need the following identity 
$$
\dbar \Delta_g \varphi_g = - 2\iota_{\bm v^{1,0}} \rho_{g}.
$$ To prove this, we calculate

\begin{align*}
	\bar\partial \Delta_g \varphi_g 
	& = 
	2\bar\partial (g^{\alpha\bar\beta}\varphi_{\alpha\bar\beta})\\
	& = 
	2 g^{\alpha\bar\beta}(\partial_{\bar\kappa} \varphi_{\alpha\bar\beta}-g^{\alpha\bar\mu}g^{\lambda\bar\beta}\partial_{\bar \kappa} g_{\lambda\bar\mu}\varphi_{\alpha\bar\beta}) d\bar z^\kappa\\
	& = 
	2 g^{\alpha\bar\beta}\nabla_{\bar\kappa}\nabla_{\bar\beta}\nabla_\alpha\varphi \cdot d\bar z^\kappa\\
	& = 
	2g^{\alpha\bar\beta}
	\left(
	-R_{\bar\kappa \alpha\bar \beta}^{\phantom{\bar\kappa\alpha\bar\beta}\bar\lambda}\nabla_{\bar\lambda}\varphi 
	+ 
	\nabla_\alpha \nabla_{\bar\kappa}\nabla_{\bar\beta}\varphi
	\right)d\bar z^\kappa\\
	& = 
	-2R_{\mu\bar\kappa} g^{\mu\bar\lambda}\nabla_{\bar\lambda}\varphi d\bar z^\kappa,
	\end{align*}
in which we use the relation $\nabla_{\bar \kappa}\nabla_{\bar\beta} \varphi= \nabla_{\bar\kappa} \left( i g_{\alpha\bar\beta}v^\alpha\right)=ig_{\alpha\bar\beta} \nabla_{\bar\kappa}v^\alpha=0$ by the holomorphicity of $\bm v^{1,0}$. 	Finally, using again the formula $\bar\partial \varphi = \iota_{\bm v^{1,0}} \omega$, i.e. $\varphi_{\bar\lambda} = i g_{\mu\bar\lambda}v^\mu$, the above equation yields 
\[
\bar\partial \Delta_g \varphi_g 
= 
-2 i R_{\mu\bar\kappa} v^\mu d\bar z^\kappa
= 
-2\iota_{\bm v^{1,0}} \rho_g.
\]
Using the various identities above, we conclude with the following calculation
\begin{align*}
- \frac{d}{dt} \cF_{0}(g_t) & = 2i \int_{S^2} \bm v^{1,0}(\dot u) \Big{(}\rho_{g} - \hat K \omega \Big{)} 
+
\int_{S^2} \varphi_g 
\left( - i \partial\bar\partial \Delta_g \dot u - \hat K\cdot 2i\partial\bar\partial\dot u\right)\\
&= 
2i \int_{S^2} v^z \dot u_z (\rho_g-\hat K \omega)+ 
\int_{S^2} \varphi_g 
\left( - i \partial\bar\partial \Delta_g \dot u - \hat K\cdot 2i\partial\bar\partial\dot u\right)\\
& = 
2i \int_{S^2} \dot u_z dz \wedge \iota_{\bm v^{1,0}} (\rho_g - \hat K \omega) + 
\int_{S^2} \varphi_g 
\left( - i \partial\bar\partial \Delta_g \dot u - \hat K\cdot 2i\partial\bar\partial\dot u\right)\\
& = 
i \int_{S^2} \dot u_z dz \wedge (-\bar\partial \Delta_g \varphi_g - 2\hat K \bar\partial \varphi_g) 
+
\int_{S^2} \varphi_g 
\left( - i \partial\bar\partial \Delta_g \dot u - \hat K\cdot 2i\partial\bar\partial\dot u\right)\\
& = 
\frac{1}{2}\int_{S^2} \Delta_g \dot u \left( \Delta_g \varphi_g + 2\hat K \varphi_g\right) \omega
-
\frac{1}{2} \int_{S^2} \left( \Delta_g \varphi_g \Delta_g \dot u + 2\hat K \varphi_g\Delta_g \dot u  \right)\omega\\
& = 0.
\end{align*}

\end{proof}

We are ready to recall the definition of the Futaki invariant for the self-dual Einstein-Maxwell-Higgs equations, originally introduced in \cite{AlGaGaPi}.

\begin{proposition}[Einstein-Maxwell-Higgs Futaki invariant]\label{prop:Futaki}
The following expression does not depend on the choice of smooth functions $\eta$ and $f$
\begin{align*}
\cF_{\ell,N,V,\tau} & = 2i\frac{a}{\tau^2}\int_{S^2} \psi_f \left( \frac{4\pi N}{V} - \Delta_{g_0} f + e^\eta(e^f \Phi - \tau^2) \right) \omega_0\\
& -   \int_{S^2} \varphi_\eta \left( 2K_{g_0} - \Delta_{g_0}(\eta + \tfrac{a}{\tau^2} e^f \Phi) - a \left(  \frac{4\pi N}{V} - \Delta_{g_0} f\right) \right) \omega_0,
\end{align*}
provided that $\int_{S^2} e^\eta \omega_0 = V$. Consequently, it defines an invariant which vanishes if the equations \eqref{eq:Yangreductiond=2BIS} admit a smooth solution $(\eta,f)$ with $\int_{S^2} e^\eta \omega_0 = V$ and $\varepsilon^2 = 1$.
\end{proposition}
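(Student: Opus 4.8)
The plan is to show two things: first, that the expression $\cF_{\ell,N,V,\tau}$ is independent of $\eta$ and $f$ (subject to $\int_{S^2} e^\eta \omega_0 = V$); and second, that it vanishes on an actual solution of \eqref{eq:Yangreductiond=2BIS} with $\varepsilon^2=1$. The vanishing on solutions is the easy half: if $(\eta,f)$ solves \eqref{eq:Yangreductiond=2BIS} with $\varepsilon=1$, then the two parenthetical factors $\bigl(\tfrac{4\pi N}{V} - \Delta_{g_0} f + e^\eta(e^f\Phi-\tau^2)\bigr)$ and $\bigl(2K_{g_0} - \Delta_{g_0}(\eta + \tfrac{a}{\tau^2}e^f\Phi) - a(\tfrac{4\pi N}{V} - \Delta_{g_0}f)\bigr)$ are designed to be precisely the equations, hence both vanish pointwise, so the integrand is identically zero. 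So the content is entirely in the invariance statement, and once that is established one may evaluate $\cF$ at any convenient pair $(\eta,f)$; a natural choice would be $\eta \equiv \log(V/\vol) $-constant and $f\equiv$ constant, where the $\varphi_\eta$-term collapses as in Lemma \ref{lem:Futakiclas} and the $\psi_f$-term reduces to the residual topological/combinatorial quantity that encodes $\ell$ and $N$.

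For the invariance, I would proceed by computing the first variation of $\cF$ separately in $f$ and in $\eta$ and checking it vanishes; path-connectedness of the relevant spaces of functions (fixing $\int e^\eta\omega_0 = V$) then finishes the argument, exactly as in the proof of Lemma \ref{lem:Futakiclas}. For the variation in $f$: write $\psi_{f_t}$ with $\dot\psi_f = \partial\dot f(\bm v^{1,0}) = \bm v^{1,0}(\dot f)$ (from the explicit formula in Lemma \ref{eq:Psif}), and note $\frac{d}{dt}(e^{f_t}\Phi) = e^f\Phi\,\dot f$. The terms involving $\Delta_{g_0}f$ and $e^f\Phi$ are handled by integrating by parts and invoking Lemma \ref{lem:dbarPsifmixed}, which says $\dbar\bigl(\psi_f(e^f\Phi-\tau^2)\bigr) = -\iota_{\bm v^{1,0}}\bigl(\dbar\partial(e^f\Phi) - \tau^2\dbar\partial\log h\bigr)$ — this is the structural identity that makes the $f$-variation of the two lines cancel against each other, with the factor $\tfrac{a}{\tau^2}$ and the $-a(\cdots)$ term in the second line matched precisely for this purpose. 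For the variation in $\eta$: here $\dot\varphi_\eta$ picks up a term from $\dbar\varphi_\eta = e^\eta\iota_{\bm v^{1,0}}\omega_0$ (so $\dbar\dot\varphi_\eta = e^\eta\dot\eta\,\iota_{\bm v^{1,0}}\omega_0 + (\text{correction from the normalization } \int\varphi_\eta e^\eta\omega_0 = 0)$), and one uses the classical identity $\dbar\Delta_g\varphi_g = -2\iota_{\bm v^{1,0}}\rho_g$ from Lemma \ref{lem:Futakiclas} together with $\dbar\varphi_\eta$ to integrate by parts in the $2K_{g_0} - \Delta_{g_0}\eta$ part, which is the piece already shown to be $\eta$-invariant in Lemma \ref{lem:Futakiclas}. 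The remaining $\eta$-dependent pieces, $e^\eta(e^f\Phi-\tau^2)$ appearing in both lines, must be matched against one another using the same Lemma \ref{lem:dbarPsifmixed}-type manipulation.

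The main obstacle I anticipate is the bookkeeping in the $\eta$-variation: unlike $f$, the definition of $\varphi_\eta$ involves the weight $e^\eta$ both in the $\dbar$-equation and in the normalization $\int\varphi_\eta e^\eta\omega_0 = 0$, so $\dot\varphi_\eta$ is not simply $\bm v^{1,0}(\dot\eta)$ — there is a correction constant to pin down, exactly as the constant $c$ was pinned down in Lemma \ref{lem:Futakiclas} via the normalization. One must carefully track how this interacts with the volume constraint $\int e^\eta\omega_0 = V$ (which is why the scaling $\varphi_{\eta+\log t} = t\varphi_\eta$ is available and why the constraint is imposed), and verify that the cross-terms between the $\varphi_\eta$-line and the $\psi_f$-line — which are genuinely new compared with the classical case — cancel. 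I would organize this by first reducing to the case $f\equiv 0$ using $f$-invariance, so that $e^f\Phi = \Phi$ and $\psi_f = \psi_0 = \ell - N\frac{|z|^2}{1+|z|^2}$ is explicit, and only then vary $\eta$; this should make the required integrations by parts and the appeal to Lemmas \ref{eq:Psif}, \ref{lem:dbarPsifmixed}, and \ref{lem:Futakiclas} as transparent as possible.
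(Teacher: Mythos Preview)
Your plan matches the paper's variational approach, and you correctly identify Lemmas \ref{eq:Psif} and \ref{lem:dbarPsifmixed} as the structural inputs. Two organizational differences are worth noting. First, the paper applies Lemma \ref{lem:Futakiclas} at the very outset to strip off the classical piece $-\int\varphi_\eta(2K_{g_0}-\Delta_{g_0}\eta)\,\omega_0$, rather than carrying it through the variation. Second --- and this dissolves precisely the obstacle you flag --- the paper does not vary in $\eta$ directly: it writes $e^\eta\omega_0 = (1+\Delta_{g_0}u)\,\omega_0$ and varies in the K\"ahler potential $u$, so that $\dot\varphi_g = 2i\,\bm v^{1,0}(\dot u)$ is already available from the proof of Lemma \ref{lem:Futakiclas} and no normalization constant needs to be chased. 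With that change of variables the $u$-variation vanishes by a single application of Lemma \ref{lem:dbarPsifmixed}, while the $f$-variation vanishes by the two identities in \eqref{eq:potentialfstructure} (not Lemma \ref{lem:dbarPsifmixed}, which plays no role there); the two variations are handled independently, and the detour through $f\equiv 0$ is unnecessary.
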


\begin{proof}
We denote $\cF(\eta,f) = \cF_{\ell,N,V,\tau}$, as above. By Lemma \ref{lem:Futakiclas}, we have a simplified expression
\begin{align*}
\cF(\eta,f) & = 2i\frac{a}{\tau^2}\int_{S^2} \psi_f \left(  \frac{4\pi N}{V} - \Delta_{g_0} f + e^\eta(e^f \Phi - \tau^2) \right) \omega_0\\
&\qquad + a  \int_{S^2} \varphi_\eta \left(  \tfrac{1}{\tau^2}\Delta_{g_0}( e^f \Phi) + \frac{4\pi N}{V} - \Delta_{g_0} f \right) \omega_0.
\end{align*}
Using that $\int_{S^2} e^\eta \omega_0 = V$, we have $e^\eta \omega_0 = (1 + \Delta_{g_0}u)\omega_0$ for some smooth function $u$. As in the proof of Lemma \ref{lem:Futakiclas}, denoting $\omega = (1 + \Delta_{g_0}u)\omega_0$ we can write the previous expression as
\begin{align*}
\cF(u,f) & = 4i\frac{a}{\tau^2}\int_{S^2} \psi_f \left( (N \omega_{FS}  + i \dbar \partial f) + \frac{1}{2}(e^f \Phi - \tau^2)\omega \right)\\
& \qquad + a  \int_{S^2} \varphi_g \left(  \tfrac{1}{\tau^2}\Delta_{g}( e^f \Phi)\omega + 2 (N \omega_{FS}  + i \dbar \partial f)  \right).
\end{align*}
Taking variations with respect to $u$ and applying Lemma \ref{lem:dbarPsifmixed} we have
\begin{align*}
\delta_u \cF 
& = 
2i\frac{a}{\tau^2}\int_{S^2} \psi_f \Big{(} (e^f \Phi - \tau^2)\cdot 2i\partial \dbar \dot u \Big{)} 
+ 
2i\frac{a}{\tau^2}  \int_{S^2} \bm v^{1,0}(\dot u ) \left(  2i \partial \dbar( e^f \Phi) + 2 \tau^2 (N \omega_{FS}  + i \dbar \partial f)\right)\\
& = 
2i\frac{a}{\tau^2}\int_{S^2} \dot u \cdot 2i \partial \dbar \left( \psi_f  (e^f \Phi - \tau^2) \right) 
+ 
2i\frac{a}{\tau^2}  \int_{S^2} \partial \dot u  \wedge \iota_{\bm v^{1,0}} \left(  2i \partial \dbar( e^f \Phi) + 2 \tau^2 (N \omega_{FS}  + i \dbar \partial f)\right)\\
& =
 2i\frac{a}{\tau^2}\int_{S^2} \dot u \cdot 2i \partial \Bigg(  \dbar \left( \psi_f  (e^f \Phi - \tau^2) \right) - \iota_{\bm v^{1,0}}(\partial \dbar( e^f \Phi) + \tau^2 \dbar \partial \log h)   \Bigg)\\
& = 0,
\end{align*}
where we have used that $i \dbar \partial \log h = N \omega_{FS}  + i \dbar \partial f$. Similarly, taking variations with respect to $f$, gives
\begin{align*}
\delta_f \cF & = 4i\frac{a}{\tau^2}\int_{S^2} \partial \dot f (\bm v^{1,0}) \left( i \dbar \partial \log h  + \frac{1}{2}(e^f \Phi - \tau^2)\omega \right)
 + 4i\frac{a}{\tau^2}\int_{S^2} \psi_f \left(  i \dbar \partial \dot f + \frac{1}{2} \dot f e^f \Phi\omega \right)\\
&\qquad
 + \frac{a}{\tau^2}  \int_{S^2} \varphi_g \cdot 2i \partial \dbar \left( \dot f (e^f \Phi - \tau^2) \right)\\
& = - 4i\frac{a}{\tau^2}\int_{S^2} \dot f \cdot \partial \left(  \iota_{\bm v^{1,0}} \left( i\dbar \partial \log h  + \frac{1}{2}(e^f \Phi - \tau^2)\omega \right) \right)
+ 4i\frac{a}{\tau^2}\int_{S^2} \left( \dot f \cdot i \dbar \partial \psi_f + \frac{1}{2}\psi_f \dot f e^f \Phi \omega \right)\\
&\qquad
 +2i \frac{a}{\tau^2}  \int_{S^2} \dot f (e^f \Phi - \tau^2)\cdot \partial (\iota_{\bm v^{1,0}} \omega)\\
& =
 - 4i\frac{a}{\tau^2}
 \int_{S^2} \dot f \cdot \partial \left( \iota_{\bm v^{1,0}} ( i\dbar \partial \log h) \right) 
+ 
\frac{1}{2}\dot   f\cdot \partial (e^f \Phi) \wedge  \iota_{\bm v^{1,0}} \omega
+ \frac{1}{2}\dot f (e^f\Phi-\tau^2) \partial ( \iota_{\bm v^{1,0}}\omega)  \\
&\qquad 
 + 4i\frac{a}{\tau^2}\int_{S^2} \dot f \cdot \partial \left( \iota_{\bm v^{1,0}} (i \dbar \partial \log h) \right)
 + \frac{1}{2} \dot f \cdot \psi_f e^f\Phi \omega 
  + \frac{1}{2}\dot f (e^f\Phi-\tau^2) \partial\left(\iota_{\bm v^{1,0}}\omega\right)\\
 & = 2i\frac{a}{\tau^2}\int_{S^2} \dot f 
\left( \psi_f e^f\Phi \omega 
- 
\partial (e^f\Phi) \wedge \iota_{\bm v^{1,0}} \omega
\right)\\
& = 
2i\frac{a}{\tau^2} \int_{S^2} 
\dot f \Big(  
\psi_f \phi h\overline\phi - \iota_{\bm v^{1,0}} \left( \partial\phi + \phi\partial \log h\right)h \overline\phi 
\Big) \omega\\
& = 0, 
\end{align*}
where in the third inequality we used the first item of Equation \eqref{eq:potentialfstructure} and in the last inequality we used the definition $e^f\Phi=\phi h\overline\phi$ and the second item of Equation \eqref{eq:potentialfstructure}.

\end{proof}

\begin{remark}\label{rmk:Futaki}
More precisely, the Futaki invariant for the self-dual Einstein-Maxwell-Higgs equations constructed in \cite{AlGaGaPi} is a character of the Lie algebra of infinitesimal automorphisms of the triple $(\PP^1,\mathcal{O}_{\mathbb{P}^1}(N),\phi)$. The quantity $\cF_{\ell,N,V,\tau}$ considered here is the evaluation of this character at the infinitesimal automorphism $(\bm v^{1,0},\psi_f)$ constructed in Lemma \ref{eq:Psif}.
\end{remark}

\subsection{Proof of Theorem \ref{thm:dleq2}}

We start calculating an explicit formula for the Futaki invariant.

\begin{lemma}\label{lem:EMHFutakiev}
	The Einstein-Maxwell-Higgs Futaki invariant in Proposition \ref{prop:Futaki} is 
	\[
	\cF_{\ell,N,V,\tau}
	=
	ia \left(  V- \frac{4\pi N}{\tau^2} \right) (N- 2\ell).
	\]
\end{lemma}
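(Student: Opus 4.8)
The plan is to use the invariance property from Proposition~\ref{prop:Futaki}: since $\cF_{\ell,N,V,\tau}=\cF(\eta,f)$ does not depend on $(\eta,f)$ as long as $\int_{S^2}e^\eta\omega_0=V$, I would simply evaluate it at the reference choice $\eta=0$, $f=0$, which is admissible because $\int_{S^2}\omega_0=V$. Substituting this into the simplified expression for $\cF(\eta,f)$ obtained at the beginning of the proof of Proposition~\ref{prop:Futaki}, and using the normalization $\int_{S^2}\varphi_0\,\omega_0=0$ to discard the term carrying $\frac{4\pi N}{V}$ in the second integral, the problem reduces to computing
\[
\cF_{\ell,N,V,\tau}
=2i\frac{a}{\tau^2}\int_{S^2}\psi_0\Big(\tfrac{4\pi N}{V}+\Phi-\tau^2\Big)\omega_0
+\frac{a}{\tau^2}\int_{S^2}\varphi_0\,\Delta_{g_0}\Phi\,\omega_0,
\]
with $\psi_0=\ell-N\frac{|z|^2}{1+|z|^2}$.

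The main step is to evaluate the last integral without computing $\varphi_0$ explicitly. Writing $\Delta_{g_0}\Phi\,\omega_0=2i\,\partial\dbar\Phi$ and integrating by parts once (there are no boundary terms on the closed surface $\PP^1$) transfers $\dbar$ onto $\varphi_0$, giving $\int_{S^2}\varphi_0\,\Delta_{g_0}\Phi\,\omega_0=\int_{S^2}2i\,\dbar\varphi_0\wedge\partial\Phi$. Since $\dbar\varphi_0=\iota_{\bm v^{1,0}}\omega_0$ and $\omega_0$ is a top form, the graded Leibniz rule for the contraction gives $\iota_{\bm v^{1,0}}\omega_0\wedge\partial\Phi=-(\iota_{\bm v^{1,0}}\partial\Phi)\,\omega_0=-\bm v^{1,0}(\Phi)\,\omega_0$. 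Finally, writing $\Phi=\phi\,h_{FS}^N\,\overline{\phi}$ with $\phi=z^\ell$ holomorphic, the second identity of Lemma~\ref{eq:Psif} (with $f=0$) yields $\iota_{\bm v^{1,0}}\partial\Phi=\bm v^{1,0}(\Phi)=\psi_0\Phi$. Hence $\int_{S^2}\varphi_0\,\Delta_{g_0}\Phi\,\omega_0=-2i\int_{S^2}\psi_0\Phi\,\omega_0$, and when this is substituted back the two integrals of $\psi_0\Phi$ cancel, leaving
\[
\cF_{\ell,N,V,\tau}=2i\frac{a}{\tau^2}\Big(\tfrac{4\pi N}{V}-\tau^2\Big)\int_{S^2}\psi_0\,\omega_0.
\]

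It then remains to compute the single elementary integral $\int_{S^2}\psi_0\,\omega_0$. Since the isometry $z\mapsto 1/z$ of the round metric $g_0$ exchanges $\frac{|z|^2}{1+|z|^2}$ and $\frac{1}{1+|z|^2}$, whose sum is $1$, each of them integrates to $V/2$, so $\int_{S^2}\psi_0\,\omega_0=\ell V-\frac{NV}{2}=-\tfrac{V}{2}(N-2\ell)$. Plugging this in and simplifying, $\cF_{\ell,N,V,\tau}=-i\frac{a}{\tau^2}(4\pi N-\tau^2 V)(N-2\ell)=ia\big(V-\tfrac{4\pi N}{\tau^2}\big)(N-2\ell)$, which is the claimed formula.

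There is no conceptual difficulty here once the explicit Futaki invariant of Proposition~\ref{prop:Futaki} is available; the points requiring care are the sign bookkeeping in the integration by parts and in the contraction identity (so that the coefficient of $\int_{S^2}\psi_0\Phi\,\omega_0$ comes out as $-2i$, which forces the cancellation), and keeping the conventions $\Delta_{g_0}u\,\omega_0=2i\,\partial\dbar u$ and $i\,\partial\dbar\log\Phi=-N\omega_{FS}$ away from the poles consistent throughout. As a cross-check I would also compute $\varphi_0=\tfrac{iV}{4\pi}\cdot\tfrac{|z|^2-1}{|z|^2+1}$ directly, note that it is an eigenfunction of $\Delta_{g_0}$ with eigenvalue $-\tfrac{8\pi}{V}=-2K_{g_0}$, and evaluate the remaining integrals by Euler's Beta function; the terms proportional to $\tfrac{\ell!(N-\ell)!}{(N+1)!}$ again cancel and one recovers the same answer.
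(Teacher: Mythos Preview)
Your proof is correct. Both you and the paper evaluate at the admissible reference point $\eta=f=0$ and reduce to the same pair of contributions, but you handle the $\varphi_0$-term differently. The paper computes $\varphi_0=\tfrac{V}{2\pi}\cdot\tfrac{i}{2}\tfrac{|z|^2-1}{|z|^2+1}$ explicitly, uses that it is an eigenfunction ($\Delta_{\omega_{FS}}\varphi_0=-4\varphi_0$) to rewrite $\int\varphi_0\Delta\Phi$ as $-4\int\varphi_0\Phi$, and then shows by a direct radial integration that the resulting combination $\int\Phi\big(\psi_0-\tfrac{|z|^2-1}{|z|^2+1}\big)\omega_{FS}$ vanishes. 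You instead integrate by parts, feed in $\dbar\varphi_0=\iota_{\bm v^{1,0}}\omega_0$, and invoke the structural identity $\bm v^{1,0}(\Phi)=\psi_0\Phi$ from Lemma~\ref{eq:Psif} to obtain the cancellation of the $\psi_0\Phi$-terms without ever identifying $\varphi_0$ or computing a Beta-type integral. Your route is slightly more conceptual (it explains \emph{why} the $\Phi$-dependent pieces cancel, via the automorphism property of $(\bm v^{1,0},\psi_0)$), while the paper's route is more hands-on and doubles as an independent check. The symmetry argument $z\mapsto 1/z$ you use for $\int\psi_0\,\omega_0$ is also a clean alternative to the paper's direct computation of $\int\psi_0\,\omega_{FS}=\pi(2\ell-N)$.
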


\begin{proof}
	Take $f=\eta=0$, and $\omega_0=\frac{V}{2\pi}\omega_{FS}$ in Proposition \ref{prop:Futaki}, we get that 
	\begin{align*}
	\cF_{\ell,N,V,\tau} & =
	2i\frac{a}{\tau^2}\int_{S^2} \psi_0 \left(  2N \omega_{FS} 
	+ \frac{V}{2\pi} (\Phi - \tau^2) \omega_{FS}\right)\\
	&\qquad + \frac{a}{\tau^2}  \int_{S^2} \varphi_0 \Delta_{\omega_{FS}} \Phi \omega_{FS}\\
	& =
	2i\frac{a}{\tau^2}\int_{S^2} \psi_0 \left( 2N - \frac{V}{2\pi}\tau^2 \right) \omega_{FS} 
	+ 2i \frac{a}{\tau^2}\cdot \frac{V}{2\pi} 
	\int_{S^2} \Phi \psi_0 \omega_{FS}\\
	&\qquad -  \frac{4a}{\tau^2}  \int_{S^2} \varphi_0 \Phi \omega_{FS}\\
	& = 
	2i \frac{a}{\tau^2} \left( 2N- \frac{V}{2\pi}\tau^2 \right) \int_{S^2}\psi_0\omega_{FS} 
	+ 
	2i \frac{a}{\tau^2} \frac{V}{2\pi} \int_{S^2} 
	\Phi \left( \psi_0 - \frac{|z|^2-1}{|z|^2+1} \right) \omega_{FS}.
	\end{align*}
	where $\psi_0 = \ell - N\frac{|z|^2}{1+|z|^2}$, and $\varphi_0 = \frac{V}{2\pi}\cdot \frac{i}{2}\frac{|z|^2-1}{|z|^2+1}$, and we used the fact that $\Delta_{\omega_{FS}} \varphi_0 = -4\varphi_0$. The final formula for the Futaki invariant follows from the calculations 
	\begin{align*}
	\int_{S^2} \psi_0 \omega_{FS}
	& = 
	\int_{\mathbb{C}} \left( \ell - N \frac{|z|^2}{|z|^2+1}\right) \frac{idz\wedge d\bar z}{(|z|^2+1)^2}\\
	& = 
	2\pi  \int_0^\infty \left( (\ell - N)\frac{1}{(1+s)^2}  + \frac{N}{(1+s)^3} \right) ds\\
	& = 
	\pi (2\ell - N), 
	\end{align*}
	and 
	\begin{align*}
	\int_{S^2} \Phi\left(\psi_0 - \frac{|z|^2-1}{|z|^2+1}\right) \omega_{FS} 
	& =
	\int_\mathbb{C} \left( \ell - N \frac{|z|^2}{|z|^2+1} + \frac{1-|z|^2}{1+|z|^2} \right) \frac{|z|^{2\ell}}{(1+|z|^2)^{N+2}} idz\wedge d\bar z \\
	& = 
	4\pi  \int_0^\infty \left( \ell - N \frac{r^2}{r^2+1} + \frac{1-r^2}{1+r^2} \right) \frac{r^{2\ell +1}}{(1+r^2)^{N+2}}dr\\
	& = 
	2\pi \int_0^\infty \frac{(\ell +1)s^\ell + (\ell - N-1)s^{\ell +1}}{(1+s)^{N+3}} d s\\
	& = 
	2\pi \frac{ s^{\ell +1}}{(1+s)^{N+2}} |_0^\infty \\
	& = 0. 
	\end{align*}
\end{proof}

We also need the following necessary condition for the existence of solutions to the self-dual Einstein-Maxwell-Higgs equations. This inequality was originally observed by Noguchi, Bradlow, and Garc\'ia-Prada in the context of Abelian vortices on a compact surface \cite{Brad,G1,G3,Noguchi}. The proof follows directly by integrating the first equation in \eqref{eq:Yangreductiond=2BIS} using the volume form $e^\eta \omega_0$.

\begin{lemma}\label{lem:BNGP}
Assume that there exists a solution $(f,\eta)$ to \eqref{eq:Yangreductiond=2BIS} with $\int_{S^2} e^\eta \omega_0 = V$ and $\varepsilon^2=1$. Then, there holds the following inequality
$$
V > \frac{4\pi N}{\tau^2}.
$$
\end{lemma}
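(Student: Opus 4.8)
The statement to prove is Lemma~\ref{lem:BNGP}: if $(f,\eta)$ solves \eqref{eq:Yangreductiond=2BIS} with $\int_{S^2} e^\eta \omega_0 = V$ and $\varepsilon^2 = 1$, then $V > \frac{4\pi N}{\tau^2}$.

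The plan is the following. First I would integrate the first equation of \eqref{eq:Yangreductiond=2BIS},
$$
\Delta_{g_0} f = e^\eta(e^f \Phi - \tau^2) + \frac{4\pi N}{V},
$$
against the volume form $e^\eta \omega_0$ over $S^2$. Wait — actually it is cleaner to integrate against $\omega_0$ directly: since $f$ is a smooth function on the compact surface $S^2$, we have $\int_{S^2} \Delta_{g_0} f \, \omega_0 = 0$, hence
$$
0 = \int_{S^2} e^\eta(e^f \Phi - \tau^2)\,\omega_0 + \frac{4\pi N}{V}\int_{S^2}\omega_0 = \int_{S^2} e^{\eta+f}\Phi\,\omega_0 - \tau^2\int_{S^2}e^\eta\omega_0 + 4\pi N,
$$
using $\int_{S^2}\omega_0 = V$. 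Now substitute the normalization $\int_{S^2} e^\eta \omega_0 = V$ to get
$$
\int_{S^2} e^{\eta+f}\Phi\,\omega_0 = \tau^2 V - 4\pi N.
$$
The left-hand side is an integral of a strictly positive quantity: indeed $e^{\eta+f} > 0$ everywhere since $\eta, f$ are smooth real functions on $S^2$, and $\Phi = |z|^{2\ell}/(1+|z|^2)^N \geq 0$ with $\Phi$ vanishing only at $p_1$ (and at $p_2$ if $\ell \geq 1$), so $\Phi > 0$ on a nonempty open set and the integral is strictly positive. Therefore $\tau^2 V - 4\pi N > 0$, which rearranges to $V > \frac{4\pi N}{\tau^2}$, as claimed. (One could equally use $e^f\Phi = |\phi|^2_h = \phi h \overline\phi \geq 0$, not identically zero since $\phi = z^\ell$ is a nonzero holomorphic section, to see positivity invariantly.)

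There is essentially no serious obstacle here; the only point requiring a word of care is the justification that integrating the distributional equation \eqref{eq:Yangreductiond=2BIS} against $\omega_0$ makes sense and that $\int_{S^2}\Delta_{g_0} f\,\omega_0 = 0$ — but this is immediate because, after the substitution $f = u - \log\Phi$ performed in Section~\ref{sec:pre}, the Dirac masses have been absorbed and $f$ is a genuine smooth function on all of $S^2$, so Stokes' theorem applies without qualification. Strict positivity of $\int_{S^2} e^{\eta+f}\Phi\,\omega_0$ rather than mere nonnegativity is what upgrades the conclusion to a strict inequality, and this is where the fact that $\Phi$ (equivalently $\phi = z^\ell$) does not vanish identically is used; if the Higgs field were identically zero one would only get $V \geq \frac{4\pi N}{\tau^2}$, but $N \geq 1$ forces $\ell \leq N$ and the section $z^\ell$ is genuinely nonzero, so the strict inequality holds. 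This completes the proof.
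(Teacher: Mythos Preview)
Your proof is correct and follows the same approach as the paper: integrate the first equation in \eqref{eq:Yangreductiond=2BIS} over $S^2$, use that the Laplacian of the smooth function $f$ integrates to zero, and conclude from the strict positivity of $\int_{S^2} e^{\eta+f}\Phi\,\omega_0$. The paper's one-line hint speaks of integrating against the volume form $e^\eta\omega_0$ rather than $\omega_0$, but this amounts to the same computation since $(\Delta_{g_0}f)\,\omega_0 = (\Delta_g f)\,e^\eta\omega_0$ for $g=e^\eta g_0$.
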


We are ready for the proof of our main result.

\begin{proof}[Proof of Theorem \ref{thm:dleq2}]

Assume $d \leq 2$ and that there exists a solution $(u,\eta)$ of \eqref{eq:Yangreduction2} for some smooth background metric $g_0'$  on $S^2$ and some parameter $\varepsilon>0$. Notice that in the first equation in \eqref{eq:Yangreduction2} we use the volume form $\omega_0'$ of $g_0'$ to define the distributional equation. That is, for any smooth function $\psi$ on $S^2$ one has 
\[
\int_{S^2} \psi \Delta_{g_0'} u \omega_0'  = \int_{S^2} \psi \frac{1}{\varepsilon^2}e^\eta(e^u - \tau^2 ) \omega_0'+ 4\pi\sum_{j=1}^d n_j \psi(p_j).
	\]
Setting $g = \frac{1}{\varepsilon^2}e^\eta g_0'$. and taking now $\omega$ the volume form of the metric $g$, one obtains
\[
\int_{S^2} \psi \Delta_{g} u \omega  = \int_{S^2} \psi (e^u - \tau^2 ) \omega + 4\pi\sum_{j=1}^d n_j \psi(p_j).
	\]
This means that $(u,g)$ is a solution of  
\begin{equation}\label{eq:Yangreductionfinal}
\begin{split}
\Delta_g u & = (e^u - \tau^2 )+4\pi\sum_{j=1}^d n_j \delta_{p_j},\\
K_g  & = -\frac{a}{2 \tau^2}\left[\tau^2(e^u-\tau^2) -\Delta_g e^u  \right],
\end{split}
\end{equation}
where $\delta_{p_j}$ is the Dirac distribution on $(S^2,\omega)$ concentrated at $p_j$ and $\omega$ is the volume form of $g$. Let $V= \int_{S^2} \omega$, and $g_0$ be the constant scalar curvature metric on $S^2$ defined by the formula \eqref{def:g0}. Let $\omega_0$ be the area form of the metric $g_0$. Notice that $\int_{S^2} \omega_0 =V$. Set $f := u - \log \Phi$, where $\Phi$ is as in Lemma \ref{lem:PoincareLelong}. Then, for $\eta'$ the smooth function on $S^2$ such that $\omega = e^{\eta'}\omega_0$, the pair $(f,\eta')$ is a solution of  
\begin{equation*}
\begin{split}
\Delta_{g_0} f & = e^{\eta'}(e^f \Phi - \tau^2 ) + \frac{4\pi N}{V},\\
\Delta_{g_0}(\eta' + \tfrac{a}{\tau^2} e^f \Phi)  & = 2K_{g_0} + a e^{\eta'} (e^f \Phi - \tau^2),
\end{split}
\end{equation*}
such that $\int_{S^2} e^{\eta'} \omega_0 =V$, i.e. the pair $(f,\eta')$ is a solution of equations \eqref{eq:Yangreductiond=2BIS} with the parameter $\varepsilon^2=1$ and $\int_{S^2}e^{\eta'}\omega_0=V$. Therefore, by Proposition \ref{prop:Futaki} and Lemma \ref{lem:EMHFutakiev} we necessarily have
$$
\cF_{\ell,N,V,\tau} = ia \left(  V- \frac{4\pi N}{\tau^2} \right) (N- 2\ell) = 0.
$$
Finally, by Lemma \ref{lem:BNGP}, we have $N = 2 \ell$, and therefore we conclude that
$$
d = 2 \qquad \textrm{and} \qquad n_1 = n_2 = \ell = \frac{N}{2}. 
$$
\end{proof}

\begin{remark}\label{rem:HLS}
Our main result in Theorem \ref{thm:dleq2} gives infinitely many examples of configurations of points on the sphere for which there cannot be solutions to the self-dual Einstein-Maxwell-Higgs equations, in apparent contradiction with \cite[Theorem 1.2]{HLS} (see the Erratum \cite{HLSErratum}). For example, one can take $d = 1$ and arbitrary $N$, or $d = 2$ and $n_1 > n_2$.

\end{remark}

\end{document}